\DeclarePairedDelimiter\floor{\lfloor}{\rfloor}
\theoremstyle{plain}
\newtheorem{thm}{Theorem}
\theoremstyle{definition}
\newtheorem{lemma}[thm]{Lemma}
\begin{document}
\title{On the number of partitions of a number into distinct divisors}
\author{Noah Lebowitz-Lockard and Joseph Vandehey \\
Department of Mathematics \\
University of Texas at Tyler, Tyler, TX 75799 \\
nlebowitzlockard@uttyler.edu \\
jvandehey@uttyler.edu}
\maketitle

\begin{abstract} Let $p_{\textrm{dsd}} (n)$ be the number of partitions of $n$ into distinct squarefree divisors of $n$. In this note, we find a lower bound for $p_{\textrm{dsd}} (n)$, as well as a sequence of $n$ for which $p_{\textrm{dsd}} (n)$ is unusually large.
\end{abstract}

2020 Mathematics subject classification: primary 11N37; secondary 11P70.
\newline

Keywords and phrases: Distinct divisors, partitions.

\section{Introduction}

A \emph{partition} of $n$ is a representation of $n$ as an unordered sum of positive integers. We let $p(n)$ and $p_d (n)$ be the number of partitions of $n$ and the number of partitions of $n$ into distinct parts. In 1918, Hardy and Ramanujan \cite[\S $1.4$, $7.1$]{HR} proved two of the seminal results on partitions, obtaining asymptotic formulae for $p(n)$ and $p_d (n)$.

\begin{thm} As $n \to \infty$, we have
\[p(n) \sim \frac{1}{4n\sqrt{3}} \exp\left(\pi \sqrt{\frac{2n}{3}}\right), \quad p_d (n) \sim \frac{1}{4\sqrt[4]{3n^3}} \exp\left(\pi \sqrt{\frac{n}{3}}\right).\]
\end{thm}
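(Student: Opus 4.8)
The plan is to establish both asymptotics simultaneously by the circle (saddle-point) method applied to the generating functions
\[
F(q) = \sum_{n\ge0} p(n) q^n = \prod_{k\ge1}\frac{1}{1-q^k}, \qquad
G(q) = \sum_{n\ge0} p_d(n) q^n = \prod_{k\ge1}(1+q^k),
\]
both analytic on $|q|<1$ with the unit circle as natural boundary. By Cauchy's theorem $p(n) = \frac{1}{2\pi i}\oint F(q) q^{-n-1}\,dq$, and likewise for $p_d(n)$, where I integrate over the circle $|q| = e^{-t}$ for a parameter $t=t_n>0$ to be chosen so that the contour passes through the relevant saddle.

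Step one is to pin down the behaviour of $F$ and $G$ as $q\to1^-$. Writing $q=e^{-t}$ and using $-\log F(e^{-t}) = \sum_{k\ge1}\log(1-e^{-kt}) = -\sum_{m\ge1} \frac{1}{m(e^{mt}-1)}$, a Mellin transform (or Euler--Maclaurin) evaluation gives $\log F(e^{-t}) = \frac{\pi^2}{6t} + \frac12\log\frac{t}{2\pi} + O(t)$ as $t\to0^+$; equivalently this is the modular transformation law $\eta(-1/\tau)=\sqrt{-i\tau}\,\eta(\tau)$ for the Dedekind eta function, specialized to $\tau = it/(2\pi)$. Since $1+q^k = (1-q^{2k})/(1-q^k)$ gives $G(q) = F(q)/F(q^2)$, subtracting the two expansions yields $\log G(e^{-t}) = \frac{\pi^2}{12t} - \frac12\log 2 + O(t)$.

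Step two is the saddle-point extraction. For $p(n)$, parametrize the circle as $q = e^{-(t-i\theta)}$ with $|\theta|\le\pi$; then $p(n) = \frac{1}{2\pi}\int_{-\pi}^{\pi} \exp\!\big(\varphi(t-i\theta)\big)(1+o(1))\,d\theta$ where $\varphi(s) \approx \frac{\pi^2}{6s} + ns + \frac12\log\frac{s}{2\pi}$. Choosing $t_n = \pi/\sqrt{6n}$ makes the saddle of $\frac{\pi^2}{6s}+ns$ sit on the contour, with critical value $\pi\sqrt{2n/3}$; the second derivative there is of order $n^{3/2}$, so the major arc $|\theta|\le n^{-3/4+\varepsilon}$ carries the mass, and a Gaussian approximation there collects the prefactor $\sqrt{t_n/2\pi}$, the Laplace factor $\sqrt{2\pi/\varphi''(t_n)}$, and the outer $\frac1{2\pi}$ into exactly $\frac{1}{4n\sqrt3}$, giving the first formula. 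Running the identical argument with $\varphi(s)\approx \frac{\pi^2}{12s}+ns - \frac12\log2$ and $t_n = \pi/(2\sqrt{3n})$ gives critical value $\pi\sqrt{n/3}$ and, after the analogous bookkeeping of constants, the factor $\frac{1}{4\sqrt[4]{3n^3}}$ for $p_d(n)$.

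The main obstacle, as always with the circle method, is the minor-arc bound: one must show that for $\theta$ bounded away from $0$ — and, following Hardy and Ramanujan, near every root of unity $e^{2\pi i a/b}$ — the integrand is exponentially smaller than the major-arc term, uniformly in $n$. For $F$ this comes from the eta transformation near $e^{2\pi i a/b}$, where the real part of the exponent has size $\frac{\pi^2}{6b^2 t}$, strictly below $\frac{\pi^2}{6t}$ once $b\ge2$, together with a trivial bound on the remaining arcs; for $G$ one again exploits $G(q)=F(q)/F(q^2)$. Making all the constants uniform in $n$, and confirming that the $O(t)$ error in the local expansion degrades the main term only by $(1+o(1))$ after integration, is the technical heart of the proof. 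I would also note the shortcut via Meinardus's theorem: its hypotheses for $\prod_{k\ge1}(1-q^k)^{-1}$ and, using Euler's identity $G(q)=\prod_{k\ \mathrm{odd}}(1-q^k)^{-1}$, for $\prod_{k\ \mathrm{odd}}(1-q^k)^{-1}$, reduce to standard analytic facts about $\zeta(s)$ and $(1-2^{-s})\zeta(s)$ — although verifying Meinardus's own minor-arc hypothesis requires essentially the same estimate.
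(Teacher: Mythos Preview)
The paper does not prove this theorem at all: it is quoted as a classical result and attributed to Hardy and Ramanujan \cite[\S $1.4$, $7.1$]{HR}, with no argument given. Your sketch is a faithful outline of the standard circle/saddle-point method derivation (modular transformation of $\eta$, saddle at $t_n\asymp n^{-1/2}$, Gaussian main term, minor-arc damping via the transformation law at other cusps), so there is nothing in the paper to compare it against beyond the citation.
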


Seven decades later, Bowman submitted a problem to the American Mathematical Monthly, asking for an asymptotic formula for the number of partitions of $n$ into \emph{divisors} of $n$. Erd{\H o}s and Odlyzko then found precise bounds for this quantity, which we call $p_{\textrm{div}} (n)$ \cite[Seq. A018818]{OE}. From here on, we also let $d(n)$ be the number of divisors of $n$.

\begin{thm}[{\cite{BEO}}] \label{main thm} As $n \to \infty$, we have
\[n^{(1 + O(1/\log \log n))(d(n)/2 - 1)} \leq p_{\textrm{div}} (n) \leq n^{(1 + o(1)) d(n)/2}.\]
\end{thm}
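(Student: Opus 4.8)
The plan is to prove the two inequalities separately by elementary counting, using in both cases the identity $\prod_{d\mid n}d=n^{d(n)/2}$. For the upper bound I would encode a partition of $n$ into divisors of $n$ by its multiplicity vector $(a_d)_{d\mid n}$, where each $a_d\ge 0$ and $\sum_{d\mid n}a_d d=n$. Since $a_d\le n/d$, the coefficient $a_d$ lies in a set of size at most $\floor{n/d}+1\le 2n/d$, so
\[
p_{\textrm{div}}(n)\ \le\ \prod_{d\mid n}\bigl(\floor{n/d}+1\bigr)\ \le\ \prod_{d\mid n}\frac{2n}{d}\ =\ \frac{2^{d(n)}n^{d(n)}}{\prod_{d\mid n}d}\ =\ 2^{d(n)}\,n^{d(n)/2}.
\]
As $2^{d(n)}=n^{d(n)\log 2/\log n}=n^{o(d(n))}$, the right-hand side is $n^{(1+o(1))d(n)/2}$; this direction is routine.

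For the lower bound the point to exploit is that $1\mid n$. Given any multiplicities $a_d\ge 0$ for the $t:=d(n)-2$ divisors $d$ with $1<d<n$ satisfying $\sum_{1<d<n}a_d d\le n$, one obtains a partition of $n$ by appending $n-\sum_{1<d<n}a_d d$ copies of $1$, and distinct choices of $(a_d)$ yield distinct partitions; hence for any bounds $m_d\ge 0$ with $\sum_{1<d<n}m_d d\le n$ we have $p_{\textrm{div}}(n)\ge\prod_{1<d<n}(m_d+1)$. (If $t\le 0$, i.e.\ $n=1$ or $n$ is prime, the statement is trivial.) Taking the balanced bounds $m_d:=\floor{n/(td)}$, which satisfy $\sum_{1<d<n}m_d d\le t\cdot(n/t)=n$, gives
\[
p_{\textrm{div}}(n)\ \ge\ \prod_{1<d<n}(m_d+1)\ \ge\ \prod_{1<d<n}\frac{n}{td}\ =\ \frac{n^{t}}{t^{t}\prod_{1<d<n}d}\ =\ \frac{n^{t/2}}{t^{t}},
\]
using $\prod_{1<d<n}d=n^{d(n)/2}/n=n^{t/2}$.

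It then remains to absorb the factor $t^{t}$ into the exponent, and this is the one step that needs more than bookkeeping. I would invoke the classical maximal-order estimate $\log d(n)=O(\log n/\log\log n)$: this gives $t^{t}\le d(n)^{d(n)}=n^{O(d(n)/\log\log n)}$, and therefore
\[
p_{\textrm{div}}(n)\ \ge\ n^{t/2-O(d(n)/\log\log n)}\ =\ n^{(d(n)/2-1)(1+O(1/\log\log n))},
\]
since $t/2=d(n)/2-1$ and $d(n)/(d(n)/2-1)=O(1)$ for $d(n)\ge 3$. In brief: the balancing of the multiplicities that makes the lattice-point count over $\{a_d\ge 0:\sum_{1<d<n}a_d d\le n\}$ tractable costs exactly a factor $t^{t}$, and it is precisely the bound $\log d(n)\ll\log n/\log\log n$ — nothing cruder — that turns this loss into the $O(1/\log\log n)$ appearing in the exponent; everything else is elementary.
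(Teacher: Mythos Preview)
The paper does not prove this theorem at all: it is quoted as a result of Bowman, Erd{\H o}s, and Odlyzko \cite{BEO} and used only as background, so there is nothing in the paper to compare your argument against.

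That said, your proof is correct. The upper bound via $\prod_{d\mid n}(\lfloor n/d\rfloor+1)\le 2^{d(n)}n^{d(n)/2}$ is standard and clean. For the lower bound, your key observation---that any choice of multiplicities $0\le a_d\le \lfloor n/(td)\rfloor$ for the $t=d(n)-2$ proper divisors $1<d<n$ can be completed to a partition by padding with $1$'s---is exactly right, and the product identity $\prod_{1<d<n}d=n^{t/2}$ makes the count come out to $n^{t/2}/t^{t}$. The only nontrivial input is then the maximal-order bound $\log d(n)=O(\log n/\log\log n)$, which converts the loss $t^{t}$ into the claimed $O(1/\log\log n)$ in the exponent; your handling of the case $d(n)\ge 3$ (so that $d(n)/(d(n)/2-1)$ is bounded) is also fine, and $d(n)\le 2$ is trivial. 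This is essentially the argument one finds in the Monthly solution \cite{BEO}.
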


In this note, we consider another type of partition. We let $p_{\textrm{dd}} (n)$ and $p_{\textrm{dsd}} (n)$ be the number of partitions of $n$ into distinct divisors and distinct \emph{squarefree} divisors of $n$, respectively. Though these quantities appear in the Online Encyclopedia of Integer Sequences \cite[Seqs. A033630, A225245]{OE}, we are unaware of any published research on them.

Pseudoperfect numbers are numbers which can be expressed as sums of distinct proper divisors, i.e., solutions to $p_{\textrm{dd}} (n) > 1$. These numbers have a rich history \cite[\S B2]{G}. Erd{\H o}s \cite[Thm. $2$]{E} notably showed that the pseudoperfect numbers have a positive density less than $1$.

The functions $p_{\textrm{dd}} (n)$ and $p_{\textrm{dsd}} (n)$ have very erratic behavior. Let $\sigma(n)$ be the sum of the divisors of $n$. We say that $n$ is \emph{abundant} if $\sigma(n) > 2n$, \emph{deficient} if $\sigma(n) < 2n$, and \emph{perfect} if $\sigma(n) = 2n$. If $n$ is deficient, then $p_{\textrm{dd}} (n) = 1$ and if $n$ is perfect, then $p_{\textrm{dd}} (n) = 2$. While the perfect numbers are sparse, the deficient numbers have a density of approximately $0.7524$ \cite{K}, which implies that $p_{\textrm{dd}} (n) = 1$ about $3/4$ of the time. Even still, we show that $p_{\textrm{dd}} (n)$ and $p_{\textrm{dsd}} (n)$ can be quite large.

\begin{thm} \label{main thm} For a given $i$, let $p_i$ be the $i$th prime. If $n = p_1 p_2 \cdots p_k$ for some $k$, then
\[p_{\textrm{dd}} (n) = p_{\textrm{dsd}} (n) \gtrsim \frac{2^{d(n)/4} \log n}{n \log \log n} = \exp\left(\exp\left((\log 2 + o(1)) \frac{\log n}{\log \log n}\right)\right).\]
\end{thm}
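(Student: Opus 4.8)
Since $n=p_1\cdots p_k$ is squarefree, every divisor of $n$ is squarefree, so $p_{\textrm{dd}}(n)=p_{\textrm{dsd}}(n)$ with nothing to prove. The second displayed identity is only bookkeeping: by the prime number theorem $\log n=\theta(p_k)\sim p_k$ and $k\sim\log n/\log\log n$, while $d(n)=2^k$; applying $\log\log(\cdot)$ to $2^{d(n)/4}\log n/(n\log\log n)$ turns the leading term $2^{k-2}\log 2$ into $(\log 2+o(1))\log n/\log\log n$, the polynomial factor $\log n/(n\log\log n)$ contributing only to the $o(1)$. So the whole content is the lower bound. I would write $N=n/6=p_3p_4\cdots p_k$, so that $N$ is coprime to $6$ and $d(N)=2^{k-2}=d(n)/4$.

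\textbf{Reduction to a counting problem.} The mechanism is the identity $6e=e+2e+3e$, valid with $e,2e,3e,6e$ all distinct divisors of $n$ for every divisor $e$ of $N$. I would first record a \emph{completeness lemma}: the divisors of $n$ that are not divisible by $6$ (these are $1,2,3,5,7,10,11,\dots$) form a complete sequence, i.e.\ every integer between $0$ and their sum is one of their subset sums; this follows from the standard criterion $a_{i+1}\le 1+a_1+\cdots+a_i$, since $1,2,3$ lie in the sequence and divisors of a primorial grow slowly enough (exactly the reasoning showing primorials are practical). Now, for any set $A$ of distinct divisors of $N$ with $\sum_{e\in A}e\le N$, choose once a set $B$ of distinct divisors of $n$, none divisible by $6$, with $\sum_{d\in B}d=n-6\sum_{e\in A}e$ (possible since $0\le n-6\sum_{e\in A}e\le n$ is in range), and form the partition $\{6e:e\in A\}\cup B$ of $n$ into distinct divisors. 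Since $B$ contains no multiple of $6$, the set $A$ is recovered as $\{e: 6e\text{ is a part}\}$, so $A\mapsto(\text{this partition})$ is injective. Hence
\[
p_{\textrm{dsd}}(n)\ \ge\ \#\Big\{A\subseteq\mathrm{divisors\ of\ }N:\ \textstyle\sum_{e\in A}e\le N\Big\}\ =:\ F ,
\]
and the theorem is reduced to showing $F\gtrsim 2^{d(n)/4}\log n/(n\log\log n)$. Equivalently, with $X=\sum_{e\mid N}\varepsilon_e e$ for independent fair coins $\varepsilon_e$, one needs $\Pr[X\le N]\gtrsim \log n/(n\log\log n)$, i.e.\ roughly $\Pr[X\le N]\gtrsim p_k/\sigma(N)$.

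\textbf{The crux.} Since $\mathbb{E}X=\tfrac12\sigma(N)\sim\tfrac{3e^\gamma}{\pi^2}N\log\log n$, far above $N$, this is a moderate–deviation estimate. The natural lower bound keeps only the divisors of $N$ below a threshold $Y$ chosen as large as possible with $\sum_{e\mid N,\,e\le Y}e\le N$, giving $F\ge 2^{\#\{e\mid N:\,e\le Y\}}$; one is then left to bound $\#\{e\mid N:\ e>Y\}$, the least number of the largest divisors of $N$ whose sum exceeds $\sigma(N)-N$. This is estimated via $\sum_{f\mid N,\,f>T}1/f\le T^{-\lambda}\prod_{i=3}^k(1+p_i^{\lambda-1})$ (Rankin's trick), optimized over $\lambda\in(0,1)$ together with Mertens-type bounds for $\sum_{p\le p_k}p^{-(1-\lambda)}$. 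Carrying this out so that the resulting loss in the exponent is small enough to leave exactly the factor $\log n/(n\log\log n)$ is the one genuinely delicate point, and I expect it to be the main obstacle; everything preceding it is either trivial or standard. Finally, one reads off the last equality of the statement by the bookkeeping in the first paragraph.
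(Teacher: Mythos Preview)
Your reduction is correct and natural: writing $n=6N$ and using the disjointness of $\{6e:e\mid N\}$ from the $6$--free divisors of $n$ gives $p_{\textrm{dsd}}(n)\ge F:=\#\{A\subseteq\mathcal{D}(N):\sum_{e\in A}e\le N\}$, and the completeness of the $6$--free divisors is routine. But the step you flag as the ``crux'' is a genuine gap, and your threshold method does not close it. Choosing $Y$ maximal with $\sum_{e\mid N,\,e\le Y}e\le N$ forces, via $e\leftrightarrow N/e$, that $\sum_{f\mid N,\,f\ge N/Y}1/f\le 1$, hence roughly $N/Y\gtrsim d(N)=2^{k-2}$; the number $M$ of discarded divisors $e>Y$ is then at least the number of divisors of $N$ below $2^{k-2}$, and this already includes all $2^{j-2}$ products of $p_3,\dots,p_j$ whenever $\theta(p_j)\le(k-2)\log 2$, i.e.\ for $j\asymp k/\log k$. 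Thus $M\ge 2^{ck/\log k}$, whereas the loss in the exponent you can afford is only $\log_2(n\log\log n/\log n)\asymp k\log k$. So $2^{d(N)-M}$ undershoots the target $2^{d(n)/4}\log n/(n\log\log n)$ by a factor $n^{\omega(1)}$; you recover the crude double--exponential form, but not the $\gtrsim$ as stated. Optimizing $\lambda$ in Rankin's trick only trades the size of $N/Y$ against $\prod_i(1+p_i^{\lambda-1})$ and does not bring the count of small divisors down to polynomial in $k$.

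The paper sidesteps this counting problem by a different split. Instead of removing the two smallest primes, it removes the \emph{largest} prime $p_k$ together with a prime $q\sim C\log\log n$ (with $C$ just below $e^\gamma$), setting $m=n/(qp_k)$. The point of letting $q$ grow is that Mertens then gives $q\,\sigma(m)=o(n)$, so \emph{every} $A\in[0,\sigma(m)]$ automatically satisfies $qA\le n$; a one--line pigeonhole over the $2^{d(m)}$ subsets of $\mathcal{D}(m)$ now produces a single $A$ with at least $2^{d(m)}/\sigma(m)$ representations, and Lemma~\ref{lemma 2} applied to $p_km=n/q$ supplies the padding $B=n-qA\in[0,\sigma(p_km)]$. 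Since $d(m)=d(n)/4$, computing $\sigma(m)$ via Mertens yields the stated bound directly. The idea you are missing is precisely this: with the multiplier fixed at $6$ one has $\sigma(N)/N\asymp\log\log n\to\infty$, so the constraint $\sum A\le N$ is genuinely restrictive; letting the multiplier grow with $n$ makes that constraint vacuous and reduces everything to pigeonhole.
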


This result is very close to being optimal in the sense that $p_{\textrm{dd}} (n)$ cannot be substantially larger than the bound in Theorem \ref{main thm}. Because $n$ has $d(n)$ divisors, there are $2^{d(n)}$ sets of divisors of $n$, which implies that $p_{\textrm{dd}} (n) \leq 2^{d(n)}$. In addition \cite[Thm. $317$]{HR},
\[d(n) \leq 2^{(1 + o(1)) \log n/\log \log n},\]
which implies that
\[p_{\textrm{dd}} (n) \leq \exp\left(\exp\left((\log 2 + o(1)) \frac{\log n}{\log \log n}\right)\right)\]
for all $n$.

We can actually get a slightly better upper bound than $2^{d(n)}$. A classic theorem of S{\' a}rk{\" o}zy and Szemer{\' e}di \cite{SSz} states that for a given $m, k$, a set of $k$ real numbers has at most
\[(1 + o(1)) \frac{8}{\sqrt{\pi}} \frac{2^k}{k^{3/2}}\]
subsets with sum $m$ and that this bound is optimal. (Recent results such as \cite[Thm. $2.1$]{NV} suggest that one can get a better bound if the elements of the set do not lie in a small number of arithmetic progressions. It would be interesting to know if this condition applies to the divisors of a given $n$.) Hence,
\[p_{\textrm{dd}} (n) \lesssim \frac{8}{\sqrt{\pi}} \frac{2^{d(n)}}{d(n)^{3/2}}.\]

We also find some additional lower bounds on $p_{\textrm{dd}} (n)$ and $p_{\textrm{dsd}} (n)$ which may be of independent interest. In particular, if $n$ is a multiple of a large power of $2$, we can obtain the following result.

\begin{thm} \label{penultimate} Let $n = 2^a m$ with $m > 1$ odd. If $2^{a + 1} > \sigma(m)$, then
\[p_{\textrm{dd}} (n) \geq \floor*{\frac{2^{a + 1} - 1}{\sigma(m) - 1}}^{d(m) - 1}.\]
\end{thm}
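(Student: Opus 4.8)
The plan is to show that each of the $K^{d(m)-1}$ tuples $(c_d)$ — indexed by the divisors $d\mid m$ with $d>1$, with $c_d\in\{0,1,\dots,K-1\}$ and $K=\lfloor(2^{a+1}-1)/(\sigma(m)-1)\rfloor$ — gives rise to its own partition of $n$ into distinct divisors. The basis is the observation that every divisor of $n=2^a m$ is uniquely of the form $2^i d$ with $0\le i\le a$ and $d\mid m$, the uniqueness coming from $m$, hence every $d$, being odd. Consequently, picking a subset $S_d\subseteq\{0,\dots,a\}$ for each $d\mid m$ automatically produces a set of \emph{distinct} divisors $\{2^i d:d\mid m,\ i\in S_d\}$ with sum $\sum_{d\mid m}d\,T_d$, where $T_d:=\sum_{i\in S_d}2^i$ ranges independently over $\{0,1,\dots,2^{a+1}-1\}$. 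So it suffices to produce $K^{d(m)-1}$ distinct tuples $(T_d)_{d\mid m}$ with each $T_d\in[0,2^{a+1}-1]$ and $\sum_{d\mid m}d\,T_d=n$; distinct tuples yield distinct partitions, since from a partition one recovers each $S_d$, hence each $T_d$.

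Write $Q=2^{a+1}-1$. The hypothesis $2^{a+1}>\sigma(m)$ gives $Q\ge\sigma(m)>\sigma(m)-1$, so $K\ge1$, and since $\sigma(m)-1\ge m\ge3$ one also has $K\le Q/(\sigma(m)-1)<2^a$. In the construction the divisor $1$ (that is, the pure powers of $2$) absorbs the remainder: having chosen $T_d\in[0,Q]$ for every $d>1$, I set $T_1:=n-\sum_{d\mid m,\,d>1}d\,T_d$, which gives a legitimate tuple exactly when $T_1\in[0,Q]$. I would arrange the other choices so that this holds automatically. Choose an integer $x$ with $xm$ in the interval $I=\bigl[\,n-Q,\ n-(K-1)(\sigma(m)-1)\,\bigr]$, put $T_d=c_d$ for each divisor with $1<d<m$, and put $T_m=x+c_m$, with the $c_d$ free in $\{0,\dots,K-1\}$. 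Then $T_1=(n-xm)-\sum_{d\mid m,\,d>1}d\,c_d$; since $n-xm\in[(K-1)(\sigma(m)-1),\,Q]$ by the choice of $x$ and $0\le\sum_{d>1}d\,c_d\le(K-1)(\sigma(m)-1)$, subtraction gives $0\le T_1\le Q$. The other range conditions are routine: $T_d=c_d\le K-1<2^a\le Q$ for $1<d<m$, and $T_m=x+c_m\le 2^a+(K-1)<2^{a+1}-1$ since $xm\le n$ forces $x\le 2^a$.

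The step I expect to be the crux is that a suitable $x$ exists. The interval $I$ has integer endpoints; its length $Q-(K-1)(\sigma(m)-1)$ is at least $\sigma(m)-1\ge m$ (using $K(\sigma(m)-1)\le Q$), and its left endpoint $n-Q=2^a m-2^{a+1}+1$ is positive because $m\ge3$. An interval of length at least $m$ lying in the positive reals contains a positive multiple of $m$, so there is an $x$ with $1\le x\le n/m=2^a$, which is all that was used. Assembling the pieces: each of the $K^{d(m)-1}$ tuples $(c_d)_{d\mid m,\,d>1}$ produces a valid partition of $n$ into distinct divisors, distinct tuples produce distinct partitions, and so $p_{\textrm{dd}}(n)\ge K^{d(m)-1}=\lfloor(2^{a+1}-1)/(\sigma(m)-1)\rfloor^{d(m)-1}$, as claimed.
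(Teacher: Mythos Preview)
Your argument is correct. You and the paper share the same foundation: both observe that every divisor of $n=2^a m$ is uniquely $2^i d$ with $0\le i\le a$ and $d\mid m$, so that partitions of $n$ into distinct divisors correspond bijectively to tuples $(T_d)_{d\mid m}$ with $T_d\in\{0,\dots,2^{a+1}-1\}$ and $\sum_{d\mid m} d\,T_d=n$. (The paper isolates this as a separate lemma; you derive it inline.)

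Where you diverge is in how you exhibit $K^{d(m)-1}$ such tuples. The paper makes a symmetric choice: it lets \emph{every} coordinate $x_d$ with $d>1$ range independently over the single interval $\bigl[(n-Q)/(\sigma(m)-1),\,n/(\sigma(m)-1)\bigr]$, and checks that the weighted sum then automatically falls in $[n-Q,n]$, with $x_1$ absorbing the remainder. Your construction is asymmetric: you put almost all the mass on the single coordinate $T_m=x+c_m$ via a carefully chosen base point $x$ (found by locating a multiple of $m$ in an interval of length $\ge m$), let the remaining $T_d$ with $1<d<m$ be small elements of $\{0,\dots,K-1\}$, and again let $T_1$ absorb the remainder. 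The paper's route is a bit slicker --- no auxiliary existence argument for $x$ is needed, and all non-unit divisors are treated uniformly --- but yours is perfectly valid and the bookkeeping (the bounds on $T_1$, $T_m$, and the existence of $x$) all checks out.
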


As $m \to \infty$, we have $\sigma(m) = m^{1 + o(1)}$ (which we discuss in more detail later.) Fix $\epsilon > 0$. If $n = 2^a m$ is a sufficiently large number and $m < n^{(1/2) - \epsilon}$, then $2^{a + 1} > \sigma(m)$, allowing us to use the previous theorem. Thus, then number of $n \leq x$ satisfying the conditions of Theorem \ref{penultimate} is equal to $x^{(1/2) + o(1)}$.

A similar result holds for squarefree divisors. From here on, $\textrm{rad} (m)$ is the radical of $m$, i.e., the largest squarefree divisor of $m$.

\begin{thm} \label{ultimate} Let $n = q_1 q_2 \cdots q_k m$ where the $q_i$'s are an increasing sequence of primes with $q_1 = 2$ and $q_{i + 1} \leq \sigma(q_1 \cdots q_i) + 1$. If $q_1, \ldots, q_k \nmid m$ and
\[\sigma(q_1 \cdots q_k) < n < \sigma(q_1 \cdots q_k) (\sigma(\textrm{rad} (m)) - 1),\]
then
\[p_{\textrm{dsd}} (n) \geq \floor*{\frac{\sigma(q_1 \cdots q_k)}{\sigma(\textrm{rad}(m)) - 1}}^{d(\textrm{rad}(m)) - 1}.\]
\end{thm}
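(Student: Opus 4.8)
The plan is to run the argument behind Theorem~\ref{penultimate}, with the powers of $2$ there replaced by the full set of divisors of $Q := q_1 q_2 \cdots q_k$. The role of the hypotheses $q_1 = 2$ and $q_{i+1} \le \sigma(q_1 \cdots q_i) + 1$ is precisely that they are the classical criterion for $Q$ to be a \emph{practical number}; concretely, and this is all I will use, a short induction on $k$ shows that every integer $c$ with $0 \le c \le \sigma(Q)$ is a sum of distinct divisors of $Q$ (the inductive step uses that the divisors of $q_1\cdots q_{i+1}$ are the divisors of $q_1\cdots q_i$ together with $q_{i+1}$ times those, and that $q_{i+1} \le \sigma(q_1\cdots q_i)+1$ makes consecutive ``rungs'' overlap). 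Set $R = \textrm{rad}(m)$; since $q_1, \ldots, q_k \nmid m$ we have $\gcd(Q, m) = 1$, so $\textrm{rad}(n) = QR$ and the squarefree divisors of $n$ are exactly the pairwise distinct products $d e$ with $d \mid Q$ and $e \mid R$. We may assume $R > 1$, since otherwise the interval for $n$ in the statement is empty; then $\sigma(R) - 1 \ge 2$.

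Next I would parametrize a family of partitions. To a vector $(c_e)_{e \mid R}$ of integers with $0 \le c_e \le \sigma(Q)$ for every $e$ and $\sum_{e \mid R} c_e e = n$, associate the partition obtained by writing each $c_e$ as a sum of distinct divisors of $Q$, multiplying that representation through by $e$, and taking the union over $e \mid R$. Every part is a squarefree divisor of $n$; the parts are distinct, because for a fixed $e$ the chosen divisors $d e$ have distinct $Q$-parts, while for $e \ne e'$ the $m$-parts differ; and the parts sum to $\sum_e c_e e = n$. Thus each admissible vector yields a partition of $n$ into distinct squarefree divisors, and the assignment is injective, since from such a partition one recovers $c_e$ as $e^{-1}$ times the sum of those of its parts whose $m$-part equals $e$. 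Hence $p_{\textrm{dsd}}(n)$ is at least the number of admissible vectors $(c_e)$.

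It remains to count admissible vectors, and this is where the bounds $\sigma(Q) < n < \sigma(Q)(\sigma(R) - 1)$ do the work. Put $T = \lfloor \sigma(Q)/(\sigma(R) - 1) \rfloor$, where we may assume $T \ge 1$, else the claimed bound is $0$. I would let $c_e$ range over a block of $T$ consecutive integers $T_0, T_0 + 1, \ldots, T_0 + T - 1$ for each of the $d(R) - 1$ divisors $e \mid R$ with $e > 1$, and take $c_1 := n - \sum_{e > 1} c_e e$ to be forced. For any choice from the blocks, $\sum_{e > 1} c_e e$ lies between $T_0(\sigma(R) - 1)$ and $(T_0 + T - 1)(\sigma(R) - 1)$, so $c_1 \in [0, \sigma(Q)]$ for all such choices as soon as $T_0(\sigma(R) - 1) \ge n - \sigma(Q)$ and $(T_0 + T - 1)(\sigma(R) - 1) \le n$; the second inequality, together with $n < \sigma(Q)(\sigma(R) - 1)$, also gives $c_e \le T_0 + T - 1 < \sigma(Q)$ for the free coordinates. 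The set of real $T_0$ meeting both inequalities is an interval of length $\sigma(Q)/(\sigma(R) - 1) - T + 1 \ge 1$, so it contains an integer, and $\sigma(Q) < n$ forces that integer to be nonnegative. Fixing such a $T_0$, the $T^{d(R) - 1}$ block choices give that many distinct admissible vectors, and the theorem follows.

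The genuinely delicate point is this last step. In the power-of-$2$ setting of Theorem~\ref{penultimate} one can simply let all the free coordinates run over $[0,\sigma(Q)]$ and essentially read off the count, but here the forced coordinate $c_1$ must itself land in $[0, \sigma(Q)]$; this is exactly why \emph{both} inequalities on $n$ are needed and why the block must be shifted by a carefully chosen $T_0$ rather than started at $0$. The remaining ingredients --- the completeness property of the divisors of $Q$, the factorization of squarefree divisors of $n$, and injectivity of the parametrization --- are routine bookkeeping.
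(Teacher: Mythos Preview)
Your argument is correct and follows essentially the same route as the paper: establish that every integer up to $\sigma(Q)$ is a sum of distinct divisors of $Q$ (the paper's Lemma~\ref{lemma 2}), parametrize partitions by vectors $(c_e)_{e\mid R}$ with $0\le c_e\le\sigma(Q)$ and $\sum_e c_e e=n$ (the paper's Lemma~\ref{squarefree lemma}), and then count vectors by confining each free coordinate to a common interval of length $\sigma(Q)/(\sigma(R)-1)$. The only cosmetic difference is that the paper places every $x_d$ directly in the fixed interval $\bigl[(n-\sigma(Q))/(\sigma(R)-1),\,n/(\sigma(R)-1)\bigr]$ and checks the constraints, whereas you introduce a shift $T_0$ and argue that a suitable integer $T_0$ exists; your block $[T_0,T_0+T-1]$ is then just a subset of the paper's interval, and both yield the same count $T^{d(R)-1}$.
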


Note that if $p_i$ is the $i$th prime, then Euclid's proof of the infinitude of primes implies that $p_{i + 1} \leq p_1 \cdots p_i + 1$. So, our bound applies to $n = p_1 \cdots p_k m$, where $m$ is a number whose prime factors are greater than $p_k$.

\section{The main result}

We begin this section by showing that for certain numbers $m$, we can write all numbers up to a given bound as a sum of distinct squarefree divisors of $m$.

\begin{lemma} \label{lemma 2} Let $m = q_1 q_2 \cdots q_k$, where the $q_i$'s are distinct primes, $q_1 = 2$, and $q_{i + 1} \leq \sigma(q_1 q_2 \cdots q_i) + 1$ for all $i < k$. Then we can express every number $\leq \sigma(m)$ as a sum of distinct divisors of $m$.
\end{lemma}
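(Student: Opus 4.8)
The natural approach is induction on $k$, with the statement being: writing $m_j = q_1 q_2 \cdots q_j$, every integer in $[0, \sigma(m_j)]$ is a sum of distinct divisors of $m_j$. Actually, I would prove something slightly stronger that makes the induction go through cleanly: every integer in $[0, \sigma(m_j)]$ is a sum of distinct divisors of $m_j$, where we allow the empty sum for $0$. The base case $j = 1$ is immediate: $m_1 = 2$ has divisors $1$ and $2$, $\sigma(m_1) = 3$, and indeed $0 = \emptyset$, $1 = 1$, $2 = 2$, $3 = 1 + 2$.

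For the inductive step, suppose the claim holds for $m_j$, and consider $m_{j+1} = q_{j+1} m_j$. The divisors of $m_{j+1}$ are the divisors of $m_j$ together with $q_{j+1}$ times each divisor of $m_j$; in particular $\sigma(m_{j+1}) = (q_{j+1} + 1)\sigma(m_j)$. Given a target $N \in [0, \sigma(m_{j+1})]$, I would split into cases according to how many "blocks" of size roughly $\sigma(m_j)$ fit into $N$. Concretely, write $N = t \cdot q_{j+1} + r$ isn't quite right; instead the cleaner bookkeeping is to peel off multiples of $q_{j+1}$ coming from the $q_{j+1}$-times-divisor part. The key point is the hypothesis $q_{j+1} \le \sigma(m_j) + 1$, equivalently $q_{j+1} - 1 \le \sigma(m_j)$: this guarantees the intervals $[0,\sigma(m_j)]$ and $[q_{j+1}, q_{j+1} + \sigma(m_j)]$ overlap (or abut), so their union with further translates covers $[0, \sigma(m_{j+1})]$ with no gaps. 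More carefully: if $N \le \sigma(m_j)$, use the inductive hypothesis directly. Otherwise, choose the largest divisor-sum $S = q_{j+1} \cdot (\text{sum of some distinct divisors of } m_j)$ that we commit to using — take it to be $q_{j+1}$ times a suitable element of $[0,\sigma(m_j)]$ chosen so that $0 \le N - S \le \sigma(m_j)$ — then represent $N - S$ as a sum of distinct divisors of $m_j$ by the inductive hypothesis. Since those divisors of $m_j$ are disjoint from the divisors of the form $q_{j+1} \cdot (\text{divisor of } m_j)$, the whole representation uses distinct divisors of $m_{j+1}$.

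The main obstacle is verifying that such an $S$ always exists, i.e., that the achievable values of $N - S$ really do sweep out all of $[0, \sigma(m_j)]$ as we vary the chosen subset of divisors of $m_j$ scaled by $q_{j+1}$. This is exactly where $q_{j+1} \le \sigma(m_j) + 1$ is used: the set $\{q_{j+1} \cdot s : s \in [0,\sigma(m_j)] \cap \Z\}$ has consecutive elements differing by $q_{j+1} \le \sigma(m_j) + 1$, so each half-open window $[q_{j+1} s,\, q_{j+1} s + \sigma(m_j)]$ of length $\sigma(m_j)$ overlaps the next, and together they cover $[0,\, q_{j+1}\sigma(m_j) + \sigma(m_j)] = [0,\sigma(m_{j+1})]$; one then uses the inductive hypothesis twice (once to realize $s$ as a sum of distinct divisors of $m_j$, once for $N - q_{j+1} s$). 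I would close by noting that $m = m_k$ gives the statement of the lemma, and remark that the same argument shows the divisors used can be taken squarefree when $m$ is squarefree, which is the case relevant to $p_{\mathrm{dsd}}$.
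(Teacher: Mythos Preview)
Your proof is correct and follows essentially the same inductive argument as the paper: write a target $N$ as $q_{j+1}\,s + r$ with $s, r \in [0,\sigma(m_j)]$, then apply the inductive hypothesis to both $s$ and $r$, noting that the divisors $q_{j+1}d$ (from $s$) and $d$ (from $r$) are automatically distinct. The paper's version is terser---it simply asserts the union $\bigcup_{i,j \le \sigma(m_{k-1})} \{i q_k + j\} = [0,\sigma(m_k)]$ without spelling out the overlap argument---but the structure and the role of the hypothesis $q_{j+1} \le \sigma(m_j) + 1$ are identical.
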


\begin{proof} We prove this result by induction on $k$. Clearly, it holds for $k = 1$ because we can express $1$, $2$, and $3$ as sums of distinct divisors of $2$. Suppose $k > 1$ and we already have the result for $k - 1$. For a given $q_k \leq (q_1 + 1) \cdots (q_{k - 1} + 1) + 1$, we have that
\[\bigcup_{i, j \leq (q_1 + 1) \cdots (q_{k - 1} + 1)} \{iq_k + j\} = [0, (q_1 + 1) \cdots (q_k + 1)].\]
By assumption, we can express every number $\leq (q_1 + 1) \cdots (q_{k - 1} + 1)$ as a sum of distinct divisors of $q_1 \cdots q_{k - 1}$. In particular, for any $i, j \leq (q_1 + 1) \cdots (q_{k - 1} + 1)$, we can write
\[i = \sum_{d \in S_1} d, \quad j = \sum_{d \in S_2} d\]
for some sets $S_1$ and $S_2$ of divisors of $q_1 \cdots q_{k - 1}$. So,
\[iq_k + j = \sum_{d \in S_1} dq_k + \sum_{d \in S_2} d,\]
which is a sum of distinct divisors of $m$.
\end{proof}

Using this result, we prove our main theorem.

\begin{proof}[Proof of Theorem \ref{main thm}] Let $n = p_1 \cdots p_k$ and let $C > e^\gamma - \epsilon$ for a small positive $\epsilon$, where $\gamma$ is the Euler-Mascheroni constant. In addition, let $q$ be the prime closest to $C \log \log n$. Because the ratio of consecutive primes goes to $1$, we have $q \sim C \log \log n$ as $n \to \infty$. If $n$ is sufficiently large, then $q < p_k \sim \log n/\log \log n$, and so we have $q | n$. From here on, we let $n = qp_k m$.

There are $2^{d(m)}$ sets of distinct divisors of $m$, each of which has a sum of at most $\sigma(m)$. By the Pigeonhole Principle, there exists some $a \leq \sigma(m)$ which has at least $2^{d(m)}/\sigma(m)$ representations as a sum of distinct divisors of $m$. In addition, $\sigma(m) - a$ also has at least $2^{d(m)}/\sigma(m)$ representations because we can simply take the complement of any subset of the set of divisors of $m$ which add up to $a$. If we let $A = \max(a, \sigma(m) - a)$, we have a number $\geq \sigma(m)/2$ which we can write as a sum of distinct divisors of $m$ in at least $2^{d(m)}/\sigma(m)$ different ways.

At this point, we show that the set $\{p_1, p_2, \ldots, p_k\} \backslash \{q\}$ satisfies the conditions of the previous lemma. If $p_r < q$, then Euclid's proof of the infinitude of the primes shows that $p_r \leq p_1 \cdots p_{r - 1} + 1$. Suppose $p_r > q$ with $r \leq k$. Then the product of the primes $< p_r$ excluding $q$ is still asymptotic to $e^{p_r}$, which is much larger than $p_r$.

Let $n = qA + B$. We already know that we can express $A$ as a sum of distinct divisors of $m$ in at least $2^{d(m)}/\sigma(m)$ ways. If we can show that $B \geq 0$ and that it is possible to express $B$ as a sum of distinct divisors of $p_k m$, then we will be able to find at least $2^{d(m)}/\sigma(m)$ expressions for $n$ as a sum of distinct divisors of $n$. Simply take each sum for $A$ and multiply every element by $q$, then add the sum for $B$.

We prove that $B \in [0, \sigma(p_k m)]$. From there, Lemma \ref{lemma 2} implies that $B$ is a sum of distinct divisors of $p_k m$. In order to prove that $B \geq 0$, we need to show that $qA \leq n$. Note that $qA \leq q\sigma(m)$. Mertens' Theorem \cite[Thm. $429$]{HW} gives us
\[q \sigma(m) = qm \prod_{\substack{p \leq p_{k - 1} \\ p \neq q}} \left(1 + \frac{1}{p}\right) \sim e^\gamma \frac{n}{p_k} \log p_{k - 1} \sim e^\gamma \frac{n (\log \log n)^2}{\log n}.\]
If $n$ is sufficiently large, then $qA < n$.

We now show that $B \leq \sigma(p_k m)$. Because $A$ is positive, $B = n - qA < n$. We prove that $B \leq \sigma(p_k m)$ by showing that $\sigma(p_k m) > n$. We apply Mertens' Theorem again, obtaining
\[\sigma(p_k m) \sim e^\gamma p_k m \log \log (p_k m) \sim n(e^\gamma \log \log n)/q \sim (e^\gamma/(e^\gamma - \epsilon)) n.\]

Putting everything together gives us $p_{\textrm{dsd}} (n) \geq 2^{d(m)}/\sigma(m)$. In addition,
\[\sigma(m) \sim e^\gamma m \log \log m \sim e^\gamma \frac{n \log \log n}{qp_k} \sim \frac{e^\gamma}{e^\gamma - \epsilon} \frac{n \log \log n}{\log n}.\]
We also have $d(m) = d(n)/4$. Letting $\epsilon \to 0$ gives us our desired result.
\end{proof}

\section{Theorems \ref{penultimate} and \ref{ultimate}}

In order to prove Theorems \ref{penultimate} and \ref{ultimate}, we provide alternate characterizations of $p_{\textrm{dd}} (n)$ and $p_{\textrm{dsd}} (n)$ in terms of lattice points. From here on, we let $\mathcal{D} (k)$ be the set of divisors of an integer $k$.

\begin{lemma} \label{2^a m} If $n = 2^a m$ with $m$ odd, then
\[p_{\textrm{dd}} (n) = \#\left\{(x_d)_{d \in \mathcal{D}(m) \backslash \{1\}} : x_d \leq 2^{a + 1} - 1 \textrm{ and } n - 2^{a + 1} + 1 \leq \sum_{d \in \mathcal{D}(m) \backslash \{1\}} dx_d \leq n\right\}.\]
\end{lemma}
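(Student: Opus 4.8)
The plan is to set up a bijection between partitions of $n$ into distinct divisors and the lattice points described on the right-hand side. Write each divisor of $n = 2^a m$ uniquely as $2^j d$ with $0 \le j \le a$ and $d \mid m$. A partition of $n$ into distinct divisors is a choice, for each pair $(j,d)$, of whether $2^j d$ is used; equivalently, for each odd divisor $d$ of $m$, a choice of a subset $T_d \subseteq \{0,1,\dots,a\}$ (the powers of $2$ paired with $d$), subject to $\sum_{d \mid m} d \cdot \bigl(\sum_{j \in T_d} 2^j\bigr) = n$. Setting $y_d = \sum_{j \in T_d} 2^j$, the key observation is that $y_d$ ranges exactly over $\{0, 1, \dots, 2^{a+1}-1\}$ as $T_d$ ranges over all subsets of $\{0,\dots,a\}$, and this correspondence is a bijection (binary expansion). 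Hence partitions of $n$ into distinct divisors are in bijection with tuples $(y_d)_{d \mid m}$ with $0 \le y_d \le 2^{a+1}-1$ and $\sum_{d \mid m} d\, y_d = n$.

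The next step is to eliminate the coordinate $y_1$ (corresponding to $d = 1$, i.e. the pure powers of $2$), which is why the statement indexes over $\mathcal{D}(m) \setminus \{1\}$. Given any tuple $(y_d)_{d \in \mathcal{D}(m)\setminus\{1\}}$ with each $y_d \in \{0,\dots,2^{a+1}-1\}$, one recovers $y_1 = n - \sum_{d \in \mathcal{D}(m)\setminus\{1\}} d\, y_d$, and the original tuple lies in the correspondence above if and only if this value of $y_1$ also lies in $\{0, 1, \dots, 2^{a+1}-1\}$. That is precisely the two-sided inequality $0 \le n - \sum d\,y_d \le 2^{a+1}-1$, which rearranges to $n - 2^{a+1}+1 \le \sum_{d \in \mathcal{D}(m)\setminus\{1\}} d\, x_d \le n$ after renaming $y_d$ to $x_d$. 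So the projection forgetting the $y_1$-coordinate is a bijection between the two sets, and the count follows.

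The one genuine point that needs care — the main (mild) obstacle — is verifying that the map from partitions into distinct divisors to tuples $(y_d)_d$ really is a bijection, i.e. that distinctness of the divisor parts corresponds exactly to: within each fixed $d$, no power $2^j$ is repeated. This is where unique factorization does the work: the parts $2^j d$ and $2^{j'} d'$ with $d, d'$ odd are equal iff $j = j'$ and $d = d'$, so a multiset of parts is a set iff for each $d$ the chosen exponents are distinct, which is automatic once $y_d$ is encoded by a subset $T_d$. I would also note explicitly that no constraint links different $d$'s (parts with different odd parts are always distinct), which is what makes the coordinates $x_d$ independent and accounts for the product structure that Theorem~\ref{penultimate} will later exploit. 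Everything else is bookkeeping: expanding $\sum_{d \mid m} d\, y_d = n$, isolating the $d=1$ term, and translating the membership condition $y_1 \in \{0,\dots,2^{a+1}-1\}$ into the displayed inequalities.
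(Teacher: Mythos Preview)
Your proof is correct and follows essentially the same approach as the paper: group divisors by their odd part $d$, encode the chosen powers of $2$ attached to each $d$ as an integer $x_d \in \{0,\dots,2^{a+1}-1\}$ via binary expansion, and then eliminate the $d=1$ coordinate to obtain the displayed inequalities. If anything, you are slightly more explicit than the paper about why unique factorization guarantees the map from partitions to tuples is a bijection.
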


\begin{proof} Let $S$ be a set of divisors of $n$ with sum $n$. Every element of $S$ has the form $2^b d$ with $b \leq a$ and $d | m$. Let $x_d$ be the sum of $2^b$ for all $b$ satisfying $2^b d \in S$. Then,
\[n = \sum_{s \in S} s = \sum_{d | m} d \sum_{2^b d \in S} 2^b = \sum_{d | m} dx_d.\]
By definition, $x_d$ can be any sum of distinct powers of $2$ up to $2^a$. Hence, $x_d$ can be any non-negative integer less than $2^{a + 1}$. We have
\[\sum_{d \in \mathcal{D}(m) \backslash \{1\}} x_d d = n - x_1.\]
Setting $d$ to $1$ shows that $x_1$ can be any non-negative integer $\leq 2^{a + 1} - 1$. Therefore,
\[n - 2^{a + 1} + 1 \leq \sum_{d \in \mathcal{D}(m) \backslash \{1\}} x_d d \leq n,\]
where the only restriction on the $x_d$'s is that they are also $\leq 2^{a + 1} - 1$.

Conversely, suppose $(x_d)_{d \in \mathcal{D}(m) \backslash \{1\}}$ is a tuple satisfying the conditions of the theorem. If we let
\[x_1 = n - \sum_{d \in \mathcal{D}(m) \backslash \{1\}} dx_d,\]
then the sum of all $dx_d$ is equal to $n$. We also observe that $x_1 \geq 0$ because the sum of $dx_d$ over all $d \in \mathcal{D}(m) \backslash \{1\}$ is at most $n$. In addition, we can break each $x_d$ into powers of $2$. For each $d \in \mathcal{D}(m)$, let $T_d$ be the unique set of non-negative integers satisfying
\[x_d = \sum_{i \in T_d} 2^i.\]
By assumption, $x_d \leq 2^{a + 1} - 1$ for all $d > 1$. In addition, $x_1 \leq 2^{a + 1} - 1$ because the sum of $dx_d$ over all possible $d > 1$ is at most $n - 2^{a + 1} - 1$. Therefore, every set $T_d$ consists of elements $\leq a$. We now have a representation
\[n = \sum_{d \in \mathcal{D}(m)} x_d \sum_{i \in T_d} 2^i.\]
Each number $2^i x_d$ is a distinct divisor of $n$ and the set of pairs $(2^i, x_d)$ is uniquely determined by $(x_d)_{d \in \mathcal{D}(m) \backslash \{1\}}$. Therefore, each tuple corresponds to a different representation of $n$ as a sum of distinct divisors of $n$.
\end{proof}

\begin{lemma} \label{squarefree lemma} Let $n = q_1 \cdots q_k m$ where $q_1, \ldots, q_k$ is an increasing sequence of primes with $q_1 = 2$ and $q_{i + 1} \leq \sigma(q_1 \cdots q_i) + 1$ for all $i$ and $q_1, \ldots, q_k \nmid m$. In addition, let $\mathcal{S} = \mathcal{D} (\textrm{rad} (m))$ be the set of squarefree divisors of $m$. Then,
\[p_{\textrm{dsd}} (n) = \#\left\{(x_d)_{d \in \mathcal{S} \backslash \{1\}} : x_d \leq \sigma(q_1 \cdots q_k) \textrm{ and } n - \sigma(q_1 \cdots q_k) \leq \sum_{d \in \mathcal{S} \backslash \{1\}} dx_d \leq n\right\}.\]
\end{lemma}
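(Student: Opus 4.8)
The plan is to prove this exactly as Lemma~\ref{2^a m} was proved, with $Q = q_1 q_2 \cdots q_k$ taking the role that $2^a$ played there: in that lemma the admissible coefficients were the sums of distinct powers of $2$, i.e.\ the integers in $[0, 2^{a+1}-1]$, whereas here, by Lemma~\ref{lemma 2}, they will be the sums of distinct divisors of $Q$, i.e.\ the integers in $[0, \sigma(Q)]$. The structural fact I would record first is that, since no $q_i$ divides $m$, we have $\textrm{rad}(n) = Q\cdot\textrm{rad}(m)$ with $\gcd(Q, \textrm{rad}(m)) = 1$, so every squarefree divisor of $n$ has a unique expression as $d'd$ with $d' \mid Q$ and $d \in \mathcal{S}$.

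For the forward map I would take a set $S$ of distinct squarefree divisors of $n$ with $\sum_{s \in S} s = n$ and, for each $d \in \mathcal{S}$, let $x_d$ be the sum of those $d' \mid Q$ for which $d'd \in S$. Grouping $\sum_{s \in S} s$ according to the $\mathcal{S}$-component gives $n = \sum_{d \in \mathcal{S}} d\,x_d$, and each $x_d$, being a sum of distinct divisors of $Q$, lies in $[0, \sigma(Q)]$. Applying $0 \le x_d \le \sigma(Q)$ with $d = 1$ and eliminating $x_1 = n - \sum_{d \in \mathcal{S} \setminus \{1\}} d\,x_d$ turns that bound into $n - \sigma(Q) \le \sum_{d \in \mathcal{S} \setminus \{1\}} d\,x_d \le n$; together with $x_d \le \sigma(Q)$ for $d > 1$ this places the tuple $(x_d)_{d \in \mathcal{S} \setminus \{1\}}$ in the set on the right-hand side of the lemma.

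For the reverse direction, from such a tuple I would set $x_1 = n - \sum_{d \in \mathcal{S} \setminus \{1\}} d\,x_d$, which the inequalities force into $[0, \sigma(Q)]$. Since $q_1 = 2$ and $q_{i+1} \le \sigma(q_1 \cdots q_i) + 1$, Lemma~\ref{lemma 2} lets me write each $x_d$ (for $d \in \mathcal{S}$) as $\sum_{d' \in T_d} d'$ with the $d'$ distinct divisors of $Q$, and then $\{d'd : d \in \mathcal{S},\, d' \in T_d\}$ is a set of pairwise distinct squarefree divisors of $n$ — the distinctness coming from the unique factorization above — with sum $\sum_{d \in \mathcal{S}} d\,x_d = n$. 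Since the original tuple is recovered from this partition, this already yields one of the two inequalities making up the claimed equality, namely the lower bound for $p_{\textrm{dsd}}(n)$.

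The hard part will be upgrading this to the full equality, i.e.\ showing the forward map is injective. In Lemma~\ref{2^a m}, injectivity was automatic because each coefficient has a unique base-$2$ expansion, so the decomposition back into divisors of $n$ was forced; here a divisor of $Q$ can be a sum of distinct divisors of $Q$ in more than one way, so I would have to rule out the possibility that this ambiguity produces two genuinely different partitions of $n$ that carry the same coefficient tuple. I would attempt this by an induction on $k$ paralleling the proof of Lemma~\ref{lemma 2} and exploiting the bound $q_{i+1} \le \sigma(q_1 \cdots q_i) + 1$, and I expect this injectivity step to be the crux of the whole argument.
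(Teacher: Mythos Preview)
Your forward and reverse constructions are exactly the paper's: group a partition by its $\mathcal{S}$-component to get a tuple $(x_d)$, and conversely use Lemma~\ref{lemma 2} to expand each $x_d$ as a sum of distinct divisors of $Q=q_1\cdots q_k$. The paper then simply asserts, by analogy with Lemma~\ref{2^a m}, that these two maps are mutually inverse; you correctly isolate the missing step as injectivity of the forward map (equivalently, that the expansion in the reverse direction is forced).

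That step is not just delicate---it is false, so the equality in the lemma does not hold as stated, and no induction on $k$ will rescue it. Take $n=30$ with $q_1=2$, $q_2=3$, $m=5$, so $Q=6$, $\sigma(Q)=12$, $\mathcal{S}=\{1,5\}$. The right-hand side counts those $x_5\in[0,12]$ with $18\le 5x_5\le 30$, namely $x_5\in\{4,5,6\}$, giving $3$. But $p_{\textrm{dsd}}(30)=4$: the partitions are $\{30\}$, $\{5,10,15\}$, $\{2,3,10,15\}$, $\{1,3,5,6,15\}$, and the first two both map to $x_5=6$ (since $6=6$ and $6=1+2+3$ are two different sums of distinct divisors of $6$). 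This is precisely the non-uniqueness you anticipated.

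What your argument (and the paper's) does establish is the inequality $p_{\textrm{dsd}}(n)\ge \#\{\text{tuples}\}$: the reverse map is well-defined by Lemma~\ref{lemma 2}, and it is injective because the forward map recovers the tuple from the partition. That inequality is the only thing invoked in the proof of Theorem~\ref{ultimate}, so the downstream application survives; the lemma itself should be read as a lower bound.
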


\begin{proof} Our proof is similar to the proof of the previous lemma. The squarefree divisors of $m$ are simply the divisors of $\textrm{rad}(m)$. Every sum of distinct squarefree divisors of $n$ has the form
\[\sum_{d | \textrm{rad} (m)} d \sum_{s \in S_d} s,\]
where $S_d$ is a set of divisors of $q_1 \cdots q_k$.

Using these results, we can bound $p_{\textrm{dsd}} (n)$ from above. Lemma \ref{lemma 2} implies that we can express every number up to $\sigma(q_1 \cdots q_k)$ as a sum of distinct divisors of $q_1 \cdots q_k$. So, we can rewrite any sum of distinct divisors of $n$ in the form
\[x_1 + \sum_{d \in \mathcal{S}} dx_d,\]
with each $x_d \leq \sigma(q_1 \cdots q_k)$. If this quantity equals $n$, the rightmost sum must lie in the interval $[n - \sigma(q_1 \cdots q_k), n]$ because $x_1 \leq \sigma(q_1 \cdots q_k)$. An argument similar to the last paragraph of the previous proof implies that every tuple $(x_d)_{d \in \mathcal{S} \backslash \{1\}}$ corresponds to a unique representation of $n$ as a sum of distinct divisors of $n$.
\end{proof}

\begin{proof}[Proof of Theorem \ref{penultimate}] Let $(x_d)_{d \in \mathcal{D} (m) \backslash \{1\}}$ be a tuple of integers which lie in the interval
\[\left[\frac{n - 2^{a + 1} + 1}{\sigma(m) - 1}, \frac{n}{\sigma(m) - 1}\right].\]
We show that this tuple satisfies the conditions of Lemma \ref{2^a m}.

We note that for each $x_d$, we have
\[x_d \leq \frac{n}{\sigma(m) - 1} = \frac{2^a m}{\sigma(m) - 1} \leq 2^a \leq 2^{a + 1} - 1,\]
which shows that the $x_d$'s are not too large. In addition,
\[\sum_{d \in \mathcal{D} (m) \backslash \{1\}} dx_d \leq \left(\sum_{d \in \mathcal{D} (m) \backslash \{1\}} d\right) \frac{n}{\sigma(m) - 1} = n.\]
For the lower bound on this sum, we note that
\[\sum_{d \in \mathcal{D} (m) \backslash \{1\}} dx_d \geq \left(\sum_{d \in \mathcal{D} (m) \backslash \{1\}} d\right) \frac{n - 2^{a + 1} + 1}{\sigma(m) - 1} = n - 2^{a + 1} - 1.\]
Thus, $(x_d)$ satisfies the conditions of Lemma \ref{2^a m}.

In order to obtain our desired result, we simply bound the number of possible tuples from below. By definition, each $x_d$ lies in the interval $[(n - 2^{a + 1} + 1)/(\sigma(m) - 1), n/(\sigma(m) - 1)]$, which contains at least $\floor{(2^{a + 1} - 1)/(\sigma(m) - 1)}$ integers. In addition, $\mathcal{D} (m) \backslash \{1\}$ contains $d(m) - 1$ elements. So, the total number of possible tuples is at least
\[\floor*{\frac{2^{a + 1} - 1}{\sigma(m) - 1}}^{d(m) - 1}. \qedhere\]
\end{proof}

\begin{proof}[Proof of Theorem \ref{ultimate}] This proof is similar to the previous one. We now let $(x_d)_{d \in \mathcal{S} \backslash \{1\}}$ be a tuple of integers which lie in the interval
\[\left[\frac{n - \sigma(q_1 \cdots q_k)}{\sigma(\textrm{rad}(m)) - 1}, \frac{n}{\sigma(\textrm{rad}(m)) - 1}\right],\]
where $\mathcal{S} = \mathcal{D} (\textrm{rad} (m))$.
In this case, we need to show that $(x_d)$ satisfies the conditions of Lemma \ref{squarefree lemma}.

We now have
\[x_d \leq \frac{n}{\sigma(\textrm{rad} (m)) - 1} \leq \sigma(q_1 \cdots q_k)\]
for all $d \in \mathcal{S} \backslash \{1\}$. In addition,
\begin{eqnarray*}
\sum_{d \in \mathcal{S} \backslash \{1\}} dx_d & \leq & \left(\sum_{d \in \mathcal{S} \backslash \{1\}} d\right) \frac{n}{\sigma(\textrm{rad} (m)) - 1} = n, \\
\sum_{d \in \mathcal{S} \backslash \{1\}} dx_d & \geq & \left(\sum_{d \in \mathcal{S} \backslash \{1\}} d\right) \frac{n - \sigma(q_1 \cdots q_k)}{\sigma(\textrm{rad} (m)) - 1} = n - \sigma(q_1 \cdots q_k).
\end{eqnarray*}
To finish the proof, we observe that the interval
\[\left[\frac{n - \sigma(q_1 \cdots q_k)}{\sigma(\textrm{rad} (m)) - 1}, \frac{n}{\sigma(\textrm{rad} (m)) - 1}\right]\]
contains at least $\floor{\sigma(q_1 \cdots q_k)/(\sigma(\textrm{rad} (m)) - 1)}$ integers and that $\mathcal{S} \backslash \{1\}$ has $d(\textrm{rad} (m)) - 1$ elements. Therefore, there are at least
\[\floor*{\frac{\sigma(q_1 \cdots q_k)}{\sigma(\textrm{rad} (m)) - 1}}^{d(\textrm{rad}(m)) - 1}\]
acceptable tuples.
\end{proof}

\end{document}